\newcommand{\de}{\partial}
\newcommand{\ov}[1]{\overline{#1}}
\newcommand{\ti}[1]{\tilde{#1}}
\newcommand{\vp}{\varphi}
\newcommand{\ve}{\varepsilon}
\renewcommand{\leq}{\leqslant}
\renewcommand{\geq}{\geqslant}
\newcommand{\be}{\begin{equation}}
\newcommand{\ee}{\end{equation}}
\begin{document}
\newcounter{remark}
\newcounter{theor}
\setcounter{remark}{0}
\setcounter{theor}{1}
\newtheorem{claim}{Claim}
\newtheorem{theorem}{Theorem}[section]
\newtheorem{lemma}[theorem]{Lemma}
\newtheorem{corollary}[theorem]{Corollary}
\newtheorem{proposition}[theorem]{Proposition}
\newtheorem{question}{question}[section]
\newtheorem{defn}{Definition}[theor]
\numberwithin{equation}{section}

\title{$C^{1,1}$ regularity of geodesics in the space of volume forms}

\author{Jianchun Chu}
\address{Institute of Mathematics, Academy of Mathematics and Systems Science, Chinese Academy of Sciences, Beijing 100190, P. R. China}
\email{chujianchun@gmail.com}

\subjclass[2010]{Primary: 58E10; Secondary: 58D17, 35J60, 35J70}

\begin{abstract}
We prove a $C^{1,1}$ estimate for solutions of a class of fully nonlinear equations introduced by Chen-He. As an application, we prove the $C^{1,1}$ regularity of geodesics in the space of volume forms.
\end{abstract}

\maketitle

\section{Introduction}
Let $(M,g)$ be a Riemannian manifold of real dimension $n$. We use $\nabla$ to denote the Levi-Civita connection. Recently, Chen-He \cite{CH18} introduced the following function space
\begin{equation*}
\ti{\mathcal{H}} = \{\vp\in C^{\infty}(M)~|~\Delta\vp-b|\nabla\vp|^{2}+a(x)>0\},
\end{equation*}
where $b$ is a nonnegative constant and $a(x)$ is a positive smooth function on $M$. For any $u_{0},u_{1}\in\ti{\mathcal{H}}$, they also introduced the fully nonlinear equation
\begin{equation}\label{Chen-He equation}
u_{tt}(\Delta u-b|\nabla u|^{2}+a(x))-|\nabla u_{t}|^{2} = f,
\end{equation}
with boundary condition
\begin{equation}\label{Boundary condition 1}
u(\cdot,0) = u_{0}, \,\,\, u(\cdot,1) = u_{1},
\end{equation}
where $f$ is a nonnegative function on $M\times[0,1]$. In \cite{CH18}, Chen-He solved the equation (\ref{Chen-He equation}) with uniform weak $C^{2}$ estimates, which also hold for the degenerate case (see also \cite{He08}).

When $b=0$, $a=1$ and $f=0$, (\ref{Chen-He equation}) becomes the geodesic equation in the space of volume forms on $(M,g)$. More specifically, in \cite{Donaldson10}, Donaldson introduced a Weil-Peterson type metric on the space of volume forms (normalized) on any Riemannian manifold with fixed total volume. We write $\mathcal{H}$ for this infinite dimensional space, which can be parameterized by the space of smooth functions
\begin{equation*}
\{\vp\in C^{\infty}(M)~|~1+\Delta\vp>0\}.
\end{equation*}
For any $\vp\in\mathcal{H}$, the tangent space $T_{\vp}\mathcal{H}$ is $C^{\infty}(M)$. And the metric is defined by
\begin{equation*}
\|\delta\vp\|_{\vp}^{2} = \int_{M}|\delta\vp|^{2}(1+\Delta\vp)dV_{g} \,\,\,\, \text{for $\delta\vp\in T_{\vp}\mathcal{H}$.}
\end{equation*}
For a path $\Phi:[0,1]\rightarrow\mathcal{H}$, the energy function is given by
\begin{equation*}
E(\Phi) = \int_{0}^{1}\int_{M}|\dot{\Phi}|^{2}(1+\Delta\Phi)dV_{g}
\end{equation*}
and the geodesic equation is
\begin{equation}\label{Geodesic equation}
\Phi_{tt}(1+\Delta\Phi)-|\nabla\Phi_{t}|^{2} = 0,
\end{equation}
with boundary condition
\begin{equation*}
\Phi(\cdot,0) = \vp_{0}, \,\,\, \Phi(\cdot,1) = \vp_{1},
\end{equation*}
where $\vp_{0},\vp_{1}\in\mathcal{H}$.

To solve this equation, for any $\ve>0$, Donaldson \cite{Donaldson10} introduced the following perturbed geodesic equation
\begin{equation}\label{Perturbed geodesic equation}
(\Phi_{\ve})_{tt}(1+\Delta\Phi_{\ve})-|\nabla(\Phi_{\ve})_{t}|^{2} = \ve,
\end{equation}
with boundary condition
\begin{equation}\label{Boundary condition 2}
\Phi_{\ve}(\cdot,0) = \vp_{0}, \,\,\, \Phi_{\ve}(\cdot,1) = \vp_{1}.
\end{equation}
In \cite{CH11}, Chen-He solved this perturbed geodesic equation and proved weak $C^{2}$ estimate which is independent of $\ve$. Let $\ve\rightarrow0$. Chen-He proved that there is a unique weak geodesic $\Phi$ connecting $\vp_{0}$ and $\vp_{1}$, and that the quantities $\sup_{M\times[0,1]}|\Phi|$, $\sup_{M\times[0,1]}|\Phi_{t}|$, $\sup_{M\times[0,1]}|\nabla\Phi|$, $\sup_{M\times[0,1]}|\Phi_{tt}|$, $\sup_{M\times[0,1]}|\nabla\Phi_{t}|$, $\sup_{M\times[0,1]}|\Delta\Phi|$ are all bounded (see \cite[Theorem 1.2, Corollary 5.3]{CH11}). By the boundary condition (\ref{Boundary condition 2}), the quantity $\sup_{\de(M\times[0,1])}|\nabla^{2}\Phi|$ is also bounded. Hence, $\Phi$ is $C^{1,\alpha}$ for any $\alpha\in(0,1)$.

In general, it is well known that the weak geodesic $\Phi$ is not $C^{2}$. Actually, in complex dimension $1$, (\ref{Geodesic equation}) becomes the geodesic equation in the space of K\"{a}hler metrics. And there are many examples which show that in general the weak geodesic in the space of K\"{a}hler metrics is not $C^{2}$ (see \cite{LV13,DL12,Darvas14}). Recently, Chu-Tosatti-Weinkove \cite{CTW17} proved the $C^{1,1}$ regularity of geodesics in the space of K\"{a}hler metrics.

Hence, for (\ref{Perturbed geodesic equation}), it was expected that $\sup_{M\times[0,1]}|\nabla^{2}\Phi_{\ve}|\leq C$, where $C$ is independent of $\ve$. This implies that the weak geodesic $\Phi$ is $C^{1,1}$. In this paper, we prove the $C^{1,1}$ regularity of geodesics in the space of volume forms.

\begin{theorem}\label{C11 regularity of geodesics}
Let $(M,g)$ be a compact $n$-dimensional Riemannian manifold. For any two points $\vp_{0},\vp_{1}\in\mathcal{H}$, the weak geodesic $\Phi$ connecting them is $C^{1,1}$.
\end{theorem}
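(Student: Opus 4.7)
The plan is to prove a uniform $C^{1,1}$ estimate for solutions $\Phi_{\ve}$ of the perturbed geodesic equation (\ref{Perturbed geodesic equation}), namely $\sup_{M\times[0,1]} |\nabla^{2}\Phi_{\ve}| \le C$ with $C$ independent of $\ve > 0$, where $\nabla^{2}$ is taken with respect to the product metric on $M\times[0,1]$ (so it encompasses spatial, mixed, and $t$-$t$ second derivatives). Once this is obtained, the convergence results of \cite{CH11} together with weak-$*$ compactness of bounded Lipschitz functions immediately promote the limiting weak geodesic $\Phi$ from $C^{1,\alpha}$ to $C^{1,1}$, which proves the theorem.

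The first step is to reduce the problem to an interior estimate. On the boundary $M\times\{0,1\}$, the tangential Hessian is controlled because $\Phi_{\ve}$ agrees with the smooth data $\vp_{0},\vp_{1}$; the normal-normal second derivative $(\Phi_{\ve})_{tt}$ is controlled by the weak $C^{2}$ estimate of \cite{CH11}; and the mixed derivatives $\nabla(\Phi_{\ve})_{t}|_{t=0,1}$ are bounded via a standard barrier construction using the perturbed equation and the fact that $\vp_{0},\vp_{1}$ are smooth (this mirrors what is done in \cite{CTW17} for the K\"ahler case). Therefore the maximum of $|\nabla^{2}\Phi_{\ve}|$ is either bounded by boundary data or is attained at a point in the interior $M\times(0,1)$, and it suffices to treat the latter case.

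At an interior maximum point, following the strategy of Chu-Tosatti-Weinkove \cite{CTW17}, I would consider the test function
\begin{equation*}
Q = \log\lambda_{1} + h\bigl(|\nabla\Phi_{\ve}|^{2} + (\Phi_{\ve})_{t}^{2}\bigr) + A\,\Phi_{\ve},
\end{equation*}
where $\lambda_{1}\ge\cdots\ge\lambda_{n+1}$ are the ordered eigenvalues of the product Hessian $\nabla^{2}\Phi_{\ve}$, $h$ is an increasing function to be chosen, and $A$ is a large constant. Since $\lambda_{1}$ may fail to be smooth at points of multiplicity, I would smooth it out by perturbing by a symmetric matrix with distinct eigenvalues, as in \cite{CTW17}. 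The linearization of (\ref{Perturbed geodesic equation}) is the degenerate elliptic operator
\begin{equation*}
L(v) = (1+\Delta\Phi_{\ve})\,v_{tt} + (\Phi_{\ve})_{tt}\,\Delta v - 2\,\nabla(\Phi_{\ve})_{t}\cdot\nabla v_{t},
\end{equation*}
whose "determinant" equals $\ve > 0$. At the interior maximum of $Q$ I would apply $L$ to $Q$, differentiate the equation once and twice in the top eigendirection, and exploit the concavity structure of the operator together with the third-order commutator trick to extract good positive terms that dominate the bad quadratic terms in $\nabla^{2}\Phi_{\ve}$.

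The main obstacle, exactly as in \cite{CTW17}, will be that $L(\log\lambda_{1})$ produces bad terms of the form $\lambda_{\alpha}^{2}$ for indices $\alpha\neq 1$; these need to be absorbed by the positive "third-order" terms coming from the $\log$ together with the contribution $A\cdot L(\Phi_{\ve})$ from the auxiliary function. An additional difficulty specific to the present Riemannian setting (as opposed to the K\"ahler one) is the appearance of curvature terms of $(M,g)$ when commuting $\nabla^{2}$ with $\Delta$, but these are lower order and can be controlled by $h(|\nabla\Phi_{\ve}|^{2}+(\Phi_{\ve})_{t}^{2})$ with a suitable choice of $h$, since $|\nabla\Phi_{\ve}|$ is already bounded independently of $\ve$. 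After balancing these contributions by taking $A$ large and $h$ of the form $h(s) = -\log(2K-s)$ (or similar) with $K$ chosen uniformly, the hypothesis that $\lambda_{1}$ is very large at the maximum point leads to a contradiction, thereby yielding the desired uniform bound $\lambda_{1}\le C$ and completing the proof.
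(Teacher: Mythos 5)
Your overall architecture --- a uniform bound $\sup_{M\times[0,1]}|\nabla^{2}\Phi_{\ve}|\le C$ independent of $\ve$ for solutions of (\ref{Perturbed geodesic equation}), followed by passage to the limit via the convergence results of \cite{CH11} --- is exactly the paper's strategy, and your boundary reduction is fine (in fact easier than you make it: only the spatial Hessian needs a new estimate, since $|(\Phi_{\ve})_{tt}|$, $|\nabla(\Phi_{\ve})_{t}|$ and $|\Delta\Phi_{\ve}|$ are already bounded by \cite{CH11}, and on $M\times\{0,1\}$ the spatial Hessian is just $\nabla^{2}\vp_{0}$ or $\nabla^{2}\vp_{1}$, so no barrier is needed). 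The gap is in the interior estimate, which you leave at the level of ``run the machinery of \cite{CTW17}.'' Two specific points do not survive the transplant. First, you never identify the positive term that must dominate the bad terms. Here it is $2(\Phi_{\ve})_{tt}|\nabla^{2}\Phi_{\ve}|^{2}$, produced when the linearized operator $L$ hits $|\nabla\Phi_{\ve}|^{2}$, and it only becomes usable after the cross term $-4\sum_{i,j}(\Phi_{\ve})_{ti}(\Phi_{\ve})_{tj}(\Phi_{\ve})_{ij}$ is controlled \emph{by means of the equation itself}, via $|\nabla(\Phi_{\ve})_{t}|^{2}\le(\Phi_{\ve})_{tt}(1+\Delta\Phi_{\ve})$ and Cauchy--Schwarz; this step, absent from your sketch, is what leaves behind a residual bad term of size $-C(1+\Delta\Phi_{\ve})$.

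Second, and more seriously, the auxiliary term $A\Phi_{\ve}$ is the wrong choice for absorbing that residue. A direct computation using (\ref{Perturbed geodesic equation}) gives
\begin{equation*}
L(\Phi_{\ve}) \;=\; (1+\Delta\Phi_{\ve})(\Phi_{\ve})_{tt}+(\Phi_{\ve})_{tt}\Delta\Phi_{\ve}-2|\nabla(\Phi_{\ve})_{t}|^{2} \;=\; 2\ve-(\Phi_{\ve})_{tt},
\end{equation*}
which is bounded and essentially nonpositive: unlike the K\"ahler setting of \cite{CTW17}, where $A\vp$ contributes a large multiple of the trace of the inverse metric, here $A\,L(\Phi_{\ve})$ contributes nothing proportional to $1+\Delta\Phi_{\ve}$. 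This matters precisely in the degenerate regime: the equation only gives $(\Phi_{\ve})_{tt}\ge\ve/C$, so at the maximum point $(\Phi_{\ve})_{tt}$ may tend to $0$ as $\ve\to0$, and an inequality of the form $0\ge(\Phi_{\ve})_{tt}\bigl(|\nabla^{2}\Phi_{\ve}|^{2}-C|\nabla^{2}\Phi_{\ve}|\bigr)-C$ then yields no bound. One needs an auxiliary function whose image under $L$ is a large positive multiple of $1+\Delta\Phi_{\ve}$; the paper uses $At^{2}$, for which $L(At^{2})=2A(1+\Delta\Phi_{\ve})>0$, and this is exactly what forces $|\nabla^{2}\Phi_{\ve}|^{2}-C|\nabla^{2}\Phi_{\ve}|-C<0$ at the maximum. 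With that term in place the whole $\log\lambda_{1}$/eigenvalue-perturbation apparatus becomes unnecessary: the paper's test quantity is simply $u_{\xi\xi}+|\nabla u|^{2}+At^{2}$ maximized over unit vectors $\xi$, and the argument closes in a page because of the special quadratic structure of $Q(r)=r_{0}r_{1}-\sum_{i\ge2}r_{i}^{2}$.
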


As alluded to above, Theorem \ref{C11 regularity of geodesics} is a consequence of \cite[Theorem 1.2]{CH11} and the $C^{1,1}$ estimate for (\ref{Perturbed geodesic equation}). More generally, for (\ref{Chen-He equation}), Chen-He expected that $\sup_{M\times[0,1]}|\nabla^{2}u|$ is bounded (see \cite[Remark 2.15]{CH18}). We prove the following $C^{1,1}$ estimate, which confirms what Chen-He suggested.

\begin{theorem}\label{Main estimate}
Let $(M,g)$ be a compact $n$-dimensional Riemannian manifold. Suppose that $f$ is a positive smooth function on $M\times[0,1]$. For any smooth solution $u$ of (\ref{Chen-He equation}) satisfying
\begin{equation*}
u(\cdot,t) \in \ti{\mathcal{H}} ~\text{~for $t\in [0,1]$},
\end{equation*}
there exists a constant $C$ depending only on $\sup_{M\times[0,1]}|\nabla u|$, $\sup_{M\times[0,1]}|u_{tt}|$,  $\sup_{M\times[0,1]}|\Delta u|$, $\sup_{M\times[0,1]}f$, $\sup_{M\times[0,1]}|\nabla(f^{\frac{1}{2}})|$, $\sup_{M\times[0,1]}|\nabla^{2}(f^{\frac{1}{2}})|$, $u_{0}$, $u_{1}$, $a$, $b$ and $(M,g)$, such that
\begin{equation}\label{C11 estimate}
\sup_{M\times[0,1]}|\nabla^{2}u| \leq C.
\end{equation}
\end{theorem}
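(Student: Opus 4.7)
The plan is to adapt the $C^{1,1}$ technique of Chu-Tosatti-Weinkove \cite{CTW17} (originally for K\"ahler geodesics) to the Chen-He equation (\ref{Chen-He equation}) on a Riemannian manifold. Since the hypotheses already supply bounds for $|u_{tt}|$, $|\Delta u|$ and $|\nabla u|$, and the equation yields $|\nabla u_{t}|^{2}=u_{tt}(\Delta u-b|\nabla u|^{2}+a)-f\leq C$, the estimate (\ref{C11 estimate}) reduces to a one-sided upper bound on the largest eigenvalue $\lambda_{1}$ of the spatial Hessian $\nabla^{2}u$: together with the bounded trace $\Delta u$ this controls all eigenvalues. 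The boundary contribution is immediate: on $M\times\{0,1\}$, $u$ coincides with $u_{0}$ or $u_{1}$, so the tangential Hessian is bounded by $\|u_{0}\|_{C^{2}}+\|u_{1}\|_{C^{2}}$.

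For the interior estimate, the linearization of (\ref{Chen-He equation}),
\begin{equation*}
Lv = (\Delta u-b|\nabla u|^{2}+a)\,v_{tt}+u_{tt}\,\Delta v-2\langle\nabla u_{t},\nabla v_{t}\rangle-2bu_{tt}\langle\nabla u,\nabla v\rangle,
\end{equation*}
is uniformly elliptic on $M\times(0,1)$, as its principal symbol has determinant proportional to $f>0$. I would apply the maximum principle to
\begin{equation*}
Q = \log\lambda_{1}+\phi(|\nabla u|^{2})+Au,
\end{equation*}
with $\phi$ an increasing auxiliary function (e.g.\ $\phi(s)=-\tfrac{1}{2}\log(2K-s)$ for $K>\sup|\nabla u|^{2}$) and $A$ a large constant. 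If the maximum is attained at an interior point $(x_{0},t_{0})$ (otherwise the boundary bound applies), work in normal coordinates at $x_{0}$ diagonalizing $\nabla^{2}u$ with $u_{11}=\lambda_{1}$; a standard smooth-eigenvalue perturbation reduces to using $\log u_{11}$, and the condition $\nabla Q=0$ forces $|\nabla u_{11}|/u_{11}$ and $|(u_{11})_{t}|/u_{11}$ to be bounded in terms of $\phi'$, $A$, and bounded first derivatives of $u$.

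Twice-differentiating (\ref{Chen-He equation}) in $e_{1}$ and commuting covariant derivatives (whose curvature errors are of order $C\lambda_{1}$, as they multiply $\nabla^{2}u$) gives
\begin{equation*}
L(u_{11})\ \geq\ 2|\nabla u_{t1}|^{2}+2bu_{tt}|\nabla u_{1}|^{2}-2u_{tt1}\bigl(\nabla_{1}\Delta u-2b\langle\nabla u,\nabla u_{1}\rangle+a_{1}\bigr)+\text{bounded}.
\end{equation*}
The main obstacle is the third-derivative product, in which neither factor is a priori bounded. To handle it I would substitute, from the once-differentiated equation,
\begin{equation*}
u_{tt}\bigl(\nabla_{1}\Delta u-2b\langle\nabla u,\nabla u_{1}\rangle+a_{1}\bigr)=f_{1}+2\langle\nabla u_{t},\nabla u_{t1}\rangle-(\Delta u-b|\nabla u|^{2}+a)u_{tt1},
\end{equation*}
which turns the bad product into a positive quadratic $(\Delta u-b|\nabla u|^{2}+a)u_{tt1}^{2}/u_{tt}$ together with a cross term $O(u_{tt1}\cdot|\nabla u_{t1}|)$ that Cauchy-Schwarz absorbs into the good $|\nabla u_{t1}|^{2}$ and $u_{tt1}^{2}$ contributions. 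Combined with the positive $u_{tt}|\nabla^{2}u|^{2}$ term from $L(\phi(|\nabla u|^{2}))$ (which dominates the $C\lambda_{1}$ curvature error once $\phi'$ has a positive lower bound) and with the critical-point relation eliminating the quadratic-gradient subtraction in $L(\log u_{11})=L(u_{11})/u_{11}-(\text{positive form})/u_{11}^{2}$, the inequality $L(Q)\leq 0$ at $(x_{0},t_{0})$ eventually forces $\lambda_{1}(x_{0},t_{0})\leq C$, yielding the global bound.
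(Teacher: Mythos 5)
Your overall strategy matches the paper's: reduce to a one-sided bound on $\lambda_{1}(\nabla^{2}u)$, apply the maximum principle to $\lambda_{1}$ plus $|\nabla u|^{2}$ plus a zeroth-order term, and use the $2u_{tt}|\nabla^{2}u|^{2}$ output of the gradient term to dominate the commutator errors. Your substitution of the once-differentiated equation to convert the bad product $-2u_{tt1}\bigl(\nabla_{1}\Delta u-2b\langle\nabla u,\nabla u_{1}\rangle+a_{1}\bigr)$ into the positive quadratic $2B_{u}u_{tt1}^{2}/u_{tt}$ plus an absorbable cross term is a legitimate hands-on replacement for the paper's invocation of the concavity of $G=\log Q$ (the two computations are identical: the leftover after Cauchy--Schwarz is exactly $2fu_{tt1}^{2}/u_{tt}^{2}\geq 0$, using $|\nabla u_{t}|^{2}=u_{tt}B_{u}-f$). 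You do, however, drop the extra cross term $-2u_{tt1}f_{1}/u_{tt}$ coming from $f_{1}$, which contains the unbounded third derivative $u_{tt1}$ and must be absorbed by that spare $2fu_{tt1}^{2}/u_{tt}^{2}$, at the cost of an error $|f_{1}|^{2}/f$.

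The genuine gap is in the endgame, and it has two related parts. First, every ``good'' term you produce ($u_{tt}|\nabla^{2}u|^{2}$, $B_{u}u_{tt1}^{2}/u_{tt}$, $|\nabla u_{t1}|^{2}$) carries a factor of $u_{tt}$ or $B_{u}$, which are only bounded below by $f/C$; so any error term that is merely ``bounded'' by a constant (your $f_{11}$, the $|f_{1}|^{2}/f$ above, and $|\nabla f|$ from the gradient term) cannot be absorbed uniformly, and your argument would deliver at best $\lambda_{1}\leq C(\inf f)^{-1/2}$ --- a constant depending on $\inf f$, which is not allowed by the statement and destroys the application as $f=\ve\to 0$. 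The paper's fix is B{\l}ocki's gradient lemma applied to $f^{1/2}$, which gives $|\nabla f|^{2}\leq Cf^{3/2}$ and lets one arrange \emph{every} error to be $O(\sqrt{f})=O(u_{tt}+B_{u})$; nothing in your proposal plays this role. Second, your zeroth-order term $Au$ does not supply the positive term needed to kill the residual $-CB_{u}$ errors: a direct computation using the equation gives $L(u)=2f-u_{tt}(a+b|\nabla u|^{2})$, which contributes nothing proportional to $B_{u}$ (and is in fact negative modulo $2f$). The paper instead uses $At^{2}$, for which $dQ(At^{2})=2AB_{u}$, and it is precisely this term that absorbs the $O(B_{u})$ errors and produces the strict positivity $(2A-C)B_{u}>0$ needed for the contradiction. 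Without both ingredients the final inequality does not close.
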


Combining this $C^{1,1}$ estimate, \cite[Theorem 1.1]{CH18} and the approximation argument, we obtain the following corollary.

\begin{corollary}\label{Corollary}
Let $(M,g)$ be a compact $n$-dimensional Riemannian manifold. Suppose that $f$ is a nonnegative function on $M$ such that
\begin{equation*}
\sup_{M\times[0,1]}\left(f+|(f^{\frac{1}{2}})_{t}|+|\nabla(f^{\frac{1}{2}})|+|f_{tt}|+|\nabla^{2}(f^{\frac{1}{2}})|\right) \leq C
\end{equation*}
for a constant $C$. Then the Dirichlet problem (\ref{Chen-He equation}) has a $C^{1,1}$ solution.
\end{corollary}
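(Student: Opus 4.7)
The strategy is to combine the existence result \cite[Theorem 1.1]{CH18} for the nondegenerate equation with the $C^{1,1}$ estimate of Theorem \ref{Main estimate}, via an approximation argument. Since $f$ may vanish (making the equation degenerate) and need not a priori be smooth, I would first approximate $f$ by a sequence of positive smooth functions $f_{\ve}$ on $M\times[0,1]$ such that $f_{\ve}\to f$ uniformly and the quantities $\sup f_{\ve}$, $\sup|\nabla(f_{\ve}^{1/2})|$, $\sup|\nabla^{2}(f_{\ve}^{1/2})|$ remain uniformly bounded in $\ve$. A convenient construction is $f_{\ve}=(g_{\ve}+\ve)^{2}$, where $g_{\ve}$ is a smooth spatial mollification of $f^{1/2}$ (with a suitable extension across $t=0,1$). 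Because the assumed $C^{2}$ bounds on $f^{1/2}$ transfer directly to its mollifications, the required bounds on $f_{\ve}^{1/2}=g_{\ve}+\ve$ follow immediately, and $f_{\ve}\geq\ve^{2}>0$ is strictly positive as desired.

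By \cite[Theorem 1.1]{CH18}, for each $\ve>0$ there is a smooth solution $u_{\ve}$ of (\ref{Chen-He equation}) with $f$ replaced by $f_{\ve}$ and boundary condition (\ref{Boundary condition 1}), satisfying $u_{\ve}(\cdot,t)\in\ti{\mathcal{H}}$ for every $t\in[0,1]$. The weak $C^{2}$ estimates established in \cite{CH18} furnish uniform (in $\ve$) bounds on $\sup|\nabla u_{\ve}|$, $\sup|(u_{\ve})_{tt}|$, and $\sup|\Delta u_{\ve}|$. Combined with the uniform control on $f_{\ve}^{1/2}$ from the previous step, every quantity entering the constant $C$ in (\ref{C11 estimate}) is bounded independently of $\ve$, so Theorem \ref{Main estimate} yields $\sup_{M\times[0,1]}|\nabla^{2}u_{\ve}|\leq C$ uniformly in $\ve$.

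Finally, the uniform $C^{1,1}$ bound together with Arzel\`a-Ascoli produces a subsequence $u_{\ve_{k}}$ converging in $C^{1,\alpha}(M\times[0,1])$ for every $\alpha\in(0,1)$ to a limit $u\in C^{1,1}(M\times[0,1])$, with second derivatives converging weakly-$*$ in $L^{\infty}$. Since $f_{\ve}\to f$ uniformly, $u$ satisfies (\ref{Chen-He equation}) almost everywhere and realizes the boundary condition (\ref{Boundary condition 1}). The main technical point is the construction of $f_{\ve}$: mollifying $f$ directly and then taking square roots could destroy the $C^{2}$ bounds on $f^{1/2}$ at points where $f$ vanishes, which is why the approximation must be built from a mollification of $f^{1/2}$ itself. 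Once this is handled, the remainder of the argument is a routine passage to the limit.
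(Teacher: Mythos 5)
Your proposal is correct and follows essentially the same route the paper indicates for this corollary: approximate $f$ by positive smooth functions built from mollifications of $f^{\frac{1}{2}}$ so that the hypotheses of Theorem \ref{Main estimate} hold uniformly, solve the nondegenerate problems via \cite[Theorem 1.1]{CH18}, apply the uniform $C^{1,1}$ estimate, and pass to the limit. The paper offers no more detail than a one-line indication of this argument, and your observation that one must regularize $f^{\frac{1}{2}}$ rather than $f$ itself is exactly the right technical point to make the approximation compatible with the estimate's dependence on $|\nabla(f^{\frac{1}{2}})|$ and $|\nabla^{2}(f^{\frac{1}{2}})|$.
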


We note that (\ref{Chen-He equation}) also covers the Gursky-Streets equation when $k=1$ (see \cite{GS16}). Thus, Corollary \ref{Corollary} shows the existence of $C^{1,1}$ solutions to the Gursky-Streets equation ($k=1$).

\section{Proof of Theorem \ref{Main estimate}}
We use the same notations as in \cite{CH18}. For $r=(r_{0},r_{1},\cdots,r_{n+1})$, we write
\begin{equation*}
Q(r) = r_{0}r_{1}-\sum_{i=2}^{n+1}r_{i}^{2} \text{~and~} G(r) = \log Q(r).
\end{equation*}
We denote the first and second derivatives of $Q$ and $G$ by
\begin{equation*}
Q^{i} = \frac{\de Q}{\de r_{i}},
Q^{i,j} = \frac{\de^{2}Q}{\de r_{i}\de r_{j}},
G^{i} = \frac{\de G}{\de r_{i}},
G^{i,j} = \frac{\de^{2}G}{\de r_{i}\de r_{j}}.
\end{equation*}

For any point $x_{0}\in M$. Let $\{e_{i}\}_{i=1}^{n}$ be a local orthonormal frame in a neighborhood of $x_{0}$. In this paper, the subscripts of a function always denote the covariant derivatives. If we write $r=(u_{tt},B_{u},u_{ti})$ and $B_{u}=\Delta u-b|\nabla u|^{2}+a(x)$, then (\ref{Chen-He equation}) can be written as
\begin{equation}\label{Chen-He equation 1}
Q(r) = Q(u_{tt}, B_{u},u_{ti}) = u_{tt}B_{u}-|\nabla u_{t}|^{2} = f.
\end{equation}
Since $f>0$ and $u(\cdot,t)\in\ti{\mathcal{H}}$ for $t\in [0,1]$, we have $u_{tt}>0$ and $B_{u}>0$. By \cite[(2.8)]{CH18}, the linearized operator of $Q$ is given by
\begin{equation}\label{Definition of dQ}
dQ(\psi) = u_{tt}\left(\Delta\psi-2b(\nabla u,\nabla\psi)\right)+B_{u}\psi_{tt}-2(\nabla u_{t},\nabla\psi_{t}),
\end{equation}
where $(\cdot,\cdot)$ denotes the inner product. Clearly, the equation (\ref{Chen-He equation 1}) is elliptic.

Now we are in a position to prove Theorem \ref{Main estimate}.
\begin{proof}[Proof of Theorem \ref{Main estimate}]
Let $\lambda_{1}(\nabla^{2}u)$ be the largest eigenvalue of $\nabla^{2}u$. It is clear that
\begin{equation}\label{Main estimate equation 1}
|\nabla^{2}u| \leq C|\Delta u|+C\max\left(\lambda_{1}(\nabla^{2}u),0\right).
\end{equation}
To prove Theorem \ref{Main estimate}, it suffices to prove $\sup_{M\times[0,1]}\lambda_{1}(\nabla^{2}u)\leq C$. Hence, we consider the following quantity
\begin{equation*}
H(x,t,\xi) = u_{\xi\xi}+|\nabla u|^{2}+At^{2},
\end{equation*}
for $(x,t)\in M\times[0,1]$, $\xi\in T_{x}M$ a unit vector and $A$ a constant to be determined later. Let $(x_{0},t_{0},\xi_{0})$ be the maximum point of $H$. Without loss of generality, we assume that $(x_{0},t_{0})\notin\de(M\times[0,1])$. Otherwise, by the boundary condition (\ref{Boundary condition 1}), we obtain (\ref{C11 estimate}) directly. We choose a local orthonormal frame $\{e_{i}\}_{i=1}^{n}$ near $x_{0}$ such that
\begin{equation*}
e_{1}(x_{0}) = \xi_{0}.
\end{equation*}
In a neighborhood of $(x_{0},t_{0})$, we define a new quantity by
\begin{equation*}
\ti{H}(x,t) = H(x,t,e_{1}) = u_{11}+|\nabla u|^{2}+At^{2}.
\end{equation*}
Clearly, $\ti{H}$ still achieves its maximum at $(x_{0},t_{0})$. To prove Theorem \ref{Main estimate}, it suffices to prove $u_{11}(x_{0},t_{0})\leq C$. By the maximum principle and (\ref{Definition of dQ}), at $(x_{0},t_{0})$, we have
\begin{equation}\label{Main estimate equation 2}
0 \geq dQ(\ti{H}) = dQ(u_{11})+dQ(|\nabla u|^{2})+2AB_{u},
\end{equation}
where $B_{u}=\Delta u-b|\nabla u|^{2}+a(x)$.

From now on, all the calculations will be carried out at $(x_{0},t_{0})$. For the first term of (\ref{Main estimate equation 2}), using (\ref{Definition of dQ}), we compute
\begin{equation}\label{Main estimate equation 3}
dQ(u_{11}) = u_{tt}\left(\Delta(u_{11})-2b(\nabla u,\nabla u_{11})\right)+B_{u}u_{11tt}-2(\nabla u_{t},\nabla u_{11t}).
\end{equation}
Applying $\nabla_{e_{1}}\nabla_{e_{1}}$ to the equation $G(r)=\log f$ (the logarithm of (\ref{Chen-He equation 1})) and using the concavity of $G$ (see \cite{Donaldson10,CH11,CH18}), we see that
\begin{equation}\label{Main estimate equation 8}
G^{i}(r_{i})_{11}
  =  -G^{i,j}(r_{i})_{1}(r_{j})_{1}+\frac{f_{11}}{f}-\frac{|f_{1}|^{2}}{f^{2}}
\geq \frac{f_{11}}{f}-\frac{|f_{1}|^{2}}{f^{2}},
\end{equation}
where $r=(u_{tt},B_{u},\nabla_{i}u_{t})$. To obtain a lower bound for $G^{i}(r_{i})_{11}$, we need the following lemma.

\begin{lemma}[Lemma 3.1 of \cite{Blocki03}]\label{Lemma}
Let $\Omega$ be a domain in $\mathbf{R}^{n}$ and let $\psi\in C^{1,1}(\ov{\Omega})$ be nonnegative. Then $\sqrt{\psi}\in C^{0,1}(\Omega)$ and
\begin{equation*}
|(D\sqrt{\psi})(x)| \leq
\max\left\{ \frac{|D\psi(x)|}{2\textrm{dist}(x,\de\Omega)},\frac{1+\sup_{\Omega}\lambda_{\textrm{max}}(D^{2}\psi)}{2} \right\}
\end{equation*}
for almost all $x\in\Omega$.
\end{lemma}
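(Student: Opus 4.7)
The plan is to leverage the one-sided Hessian bound $D^{2}\psi\le MI$ (where $M:=\sup_{\Omega}\lambda_{\max}(D^{2}\psi)<\infty$ since $\psi\in C^{1,1}$) together with the non-negativity of $\psi$ in order to control $|D\psi|$ pointwise by $\sqrt{\psi}$. The starting point is the observation that the a.e.\ inequality $D^{2}\psi\le MI$ is equivalent to the concavity of $x\mapsto \psi(x)-\tfrac{M}{2}|x|^{2}$, and a standard mollification argument (convolve with a non-negative kernel, apply the smooth case, and pass to the limit in $C^{1}_{\mathrm{loc}}$) upgrades this to the pointwise one-sided Taylor inequality
\begin{equation*}
\psi(y)\le \psi(x)+\langle D\psi(x),\,y-x\rangle+\tfrac{M}{2}|y-x|^{2}
\end{equation*}
for every $x,y\in\Omega$ whose connecting segment lies in $\Omega$.

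Next, fix a point $x\in\Omega$ at which $\psi(x)>0$ and $\sqrt{\psi}$ is differentiable; then $D\sqrt{\psi}(x)=D\psi(x)/(2\sqrt{\psi(x)})$, and we may assume $D\psi(x)\ne 0$. Setting $v:=D\psi(x)/|D\psi(x)|$ and $d:=\mathrm{dist}(x,\partial\Omega)$, and testing the Taylor inequality at $y=x-tv$ with $0<t\le d$, the non-negativity $\psi(x-tv)\ge 0$ rearranges to
\begin{equation*}
|D\psi(x)|\le \frac{\psi(x)}{t}+\frac{Mt}{2}.
\end{equation*}
Now I optimize in $t$ by two cases. If $\sqrt{\psi(x)}\le d$, the admissible choice $t=\sqrt{\psi(x)}$ gives $|D\psi(x)|\le (1+M/2)\sqrt{\psi(x)}$, so $|D\sqrt{\psi}(x)|\le (1+M/2)/2\le (1+M)/2$. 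If instead $\sqrt{\psi(x)}>d$, then trivially $|D\sqrt{\psi}(x)|=|D\psi(x)|/(2\sqrt{\psi(x)})<|D\psi(x)|/(2d)$. Either way, the asserted bound holds at $x$.

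The remaining issue is that the statement quantifies over \emph{almost every} $x\in\Omega$, so $\sqrt{\psi}$ must be shown to be genuinely Lipschitz (so that its gradient exists a.e.). On the open set $\{\psi>0\}$ this is immediate from smoothness. At any interior zero $x_{0}$ of $\psi$, the minimum property forces $D\psi(x_{0})=0$, and the Taylor inequality above collapses to $\psi(y)\le \tfrac{M}{2}|y-x_{0}|^{2}$, whence $\sqrt{\psi}(y)\le \sqrt{M/2}\,|y-x_{0}|$; this supplies Lipschitz control across the zero set, giving $\sqrt{\psi}\in C^{0,1}(\Omega)$, and at a.e.\ point of $\{\psi=0\}$ the gradient of $\sqrt{\psi}$ vanishes anyway (as for any Lipschitz function on one of its level sets). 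The only step that is not purely elementary is the mollification upgrade of the a.e.\ Hessian bound to the pointwise Taylor inequality; the rest is a direct one-variable optimization.
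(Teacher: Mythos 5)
The paper offers no proof of this statement at all: it is quoted verbatim as Lemma 3.1 of B{\l}ocki \cite{Blocki03}, and your argument is essentially B{\l}ocki's original one (upgrade the a.e.\ Hessian bound to a pointwise one-sided Taylor inequality by mollification, test along $-D\psi(x)/|D\psi(x)|$ up to distance $\mathrm{dist}(x,\de\Omega)$, optimize in $t$, and treat the zero set of $\psi$ separately). The structure is sound; the one step that is not airtight is the final comparison in your first case: with $t=\sqrt{\psi(x)}$ you obtain $|D\sqrt{\psi}(x)|\leq\tfrac{1}{2}+\tfrac{M}{4}$ and then assert this is $\leq\tfrac{1+M}{2}$, which is equivalent to $M\geq 0$. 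That is not forced by the hypotheses for a general domain (e.g.\ $\psi(x)=1-|x|^{2}$ on the unit ball is nonnegative and $C^{1,1}$ with $M=-2$), so as written your argument yields the stated constant only when $\sup_{\Omega}\lambda_{\max}(D^{2}\psi)\geq 0$; in general it gives $\max\{|D\psi(x)|/(2\,\mathrm{dist}(x,\de\Omega)),\,(2+M^{+})/4\}$ with $M^{+}=\max(M,0)$. This is harmless for the present paper, since the lemma is applied on a closed manifold where $\psi$ attains an interior minimum and hence $M\geq 0$ automatically, but you should either record that assumption or state the constant you actually prove. The remaining points --- the mollification step, $D\psi=0$ at interior zeros, Lipschitz continuity of $\sqrt{\psi}$ across $\{\psi=0\}$, and the a.e.\ vanishing of $D\sqrt{\psi}$ on that set --- are all correct.
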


Using $\de M=\emptyset$ and Lemma \ref{Lemma} (taking $\psi=f^{\frac{1}{2}}$), we obtain
\begin{equation*}
|\nabla f^{\frac{1}{4}}| \leq C|\nabla(f^{\frac{1}{2}})|+C|\nabla^{2}(f^{\frac{1}{2}})|+C,
\end{equation*}
which implies
\begin{equation*}
|\nabla f|^{2} \leq Cf^{\frac{3}{2}}.
\end{equation*}
Combining this with (\ref{Main estimate equation 8}), it is clear that
\begin{equation*}
G^{i}(r_{i})_{11}
\geq \frac{2(f^{\frac{1}{2}})_{11}}{f^{\frac{1}{2}}}-\frac{|f_{1}|^{2}}{2f^{2}}
\geq -\frac{2|\nabla^{2}(f^{\frac{1}{2}})|}{f^{\frac{1}{2}}}-\frac{|\nabla f|^{2}}{2f^{2}}
\geq -\frac{C}{f^{\frac{1}{2}}}.
\end{equation*}
Recalling that $G(r)=\log Q(r)$ and $Q(r)=f$ (see (\ref{Chen-He equation 1})), it follows that
\begin{equation}\label{Main estimate equation 4}
Q^{i}(r_{i})_{11}
  =  Q(r)G^{i}(r_{i})_{11}
  =  fG^{i}(r_{i})_{11}
\geq -C\sqrt{f}.
\end{equation}
By the commutation formula for covariant derivatives, $r=(u_{tt},B_{u},u_{ti})$, $B_{u}=\Delta u-b|\nabla u|^{2}+a(x)$, $u_{tt}>0$ and $b\geq0$, it is clear that
\begin{equation}\label{Main estimate equation 5}
\begin{split}
        & Q^{i}(r_{i})_{11} \\
  =  {} & u_{tt}(B_{u})_{11}+B_{u}u_{tt11}-2\sum_{i=1}^{n}u_{ti}u_{ti11} \\
  =  {} & u_{tt}\left((\Delta u)_{11}-b(|\nabla u|^{2})_{11}+a_{11}\right)+B_{u}u_{tt11}-2\sum_{i=1}^{n}u_{ti}u_{ti11} \\[2mm]
\leq {} & u_{tt}\left(\Delta(u_{11})+C|\nabla^{2}u|\right)\\[1mm]
        & -bu_{tt}\left(\sum_{i=1}^{n}|u_{i1}|^{2}+2(\nabla u,\nabla u_{11})-C|\nabla u|^{2}\right) \\[1mm]
        & +u_{tt}a_{11}+B_{u}u_{11tt}-2(\nabla u_{11t},\nabla u_{t})+C|\nabla u_{t}|^{2} \\[3mm]
\leq {} & dQ(u_{11})+Cu_{tt}(|\nabla^{2}u|+1)+C|\nabla u_{t}|^{2}, \\[1mm]
\end{split}
\end{equation}
where we used (\ref{Main estimate equation 3}) in the last inequality. Combining (\ref{Main estimate equation 4}) and (\ref{Main estimate equation 5}), we obtain
\begin{equation}\label{Main estimate equation 6}
dQ(u_{11}) \geq -Cu_{tt}(|\nabla^{2}u|+1)-C|\nabla u_{t}|^{2}-C\sqrt{f}.
\end{equation}

For the second term of (\ref{Main estimate equation 2}), by \cite[Proposition 2.9]{CH18}, we have
\begin{equation}\label{Main estimate equation 11}
\begin{split}
dQ(|\nabla u|^{2})
   = {} & 2u_{tt}\left(\textrm{Ric}(\nabla u,\nabla u)-(\nabla u,\nabla a)\right)+2(\nabla f,\nabla u) \\
        & +2u_{tt}|\nabla^{2}u|^{2}+2B_{u}|\nabla u_{t}|^{2}-4\sum_{i,j=1}^{n}u_{ti}u_{tj}u_{ij}.
\end{split}
\end{equation}
For the reader's convenience, we give a proof of (\ref{Main estimate equation 11}) here. Using (\ref{Definition of dQ}), we compute
\begin{equation}\label{Main estimate equation 12}
\begin{split}
dQ(|\nabla u|^{2})
= {} & u_{tt}\left(\Delta(|\nabla u|^{2})-2b(\nabla u,\nabla(|\nabla u|^{2}))\right) \\[2.5mm]
     & +B_{u}(|\nabla u|^{2})_{tt}-2\left(\nabla u_{t},\nabla(|\nabla u|^{2})_{t}\right) \\[2.5mm]
= {} & 2u_{tt}\left(|\nabla^{2}u|^{2}+(\nabla u,\Delta\nabla u)+\textrm{Ric}(\nabla u,\nabla u)\right) \\[2.5mm]
     & -2bu_{tt}\left(\nabla u,\nabla(|\nabla u|^{2})\right)+2B_{u}(\nabla u,\nabla u_{tt})+2B_{u}|\nabla u_{t}|^{2} \\
     & -2\left(\nabla u,\nabla(|\nabla u_{t}|^{2})\right)-4\sum_{i,j=1}^{n}u_{ti}u_{tj}u_{ij},
\end{split}
\end{equation}
where for the second equality, we used
\begin{equation*}
\begin{split}
\left(\nabla u_{t},\nabla(|\nabla u|^{2})_{t}\right)
= {} & 2\sum_{i,j=1}^{n}u_{ti}u_{j}u_{jti}+2\sum_{i,j=1}^{n}u_{ti}u_{ji}u_{jt} \\
= {} & \left(\nabla u,\nabla(|\nabla u_{t}|^{2})\right)+2\sum_{i,j=1}^{n}u_{ti}u_{tj}u_{ij}.
\end{split}
\end{equation*}
Taking derivative of the equation (\ref{Chen-He equation 1}), it is clear that
\begin{equation*}
u_{tt}\left(\nabla\Delta u-b\nabla(|\nabla u|^{2})+\nabla a\right)+B_{u}\nabla u_{tt}-\nabla(|\nabla u_{t}|^{2}) = \nabla f,
\end{equation*}
which implies
\begin{equation}\label{Main estimate equation 13}
\begin{split}
2(\nabla u,\nabla f)-2u_{tt}(\nabla u,\nabla a)
= {} & 2u_{tt}(\nabla u,\nabla\Delta u)-2bu_{tt}(\nabla u,\nabla(|\nabla u|^{2})) \\
     & +2B_{u}(\nabla u,\nabla u_{tt})-2(\nabla u,\nabla(|\nabla u_{t}|^{2})).
\end{split}
\end{equation}
Combining (\ref{Main estimate equation 12}) with (\ref{Main estimate equation 13}), we obtain (\ref{Main estimate equation 11}).

Using (\ref{Main estimate equation 11}) and $u_{tt}>0$, we have
\begin{equation}\label{Main estimate equation 9}
\begin{split}
dQ(|\nabla u|^{2})
\geq {} & -Cu_{tt}-C|\nabla f|+2u_{tt}|\nabla^{2}u|^{2}+2B_{u}|\nabla u_{t}|^{2} \\[2mm]
        & -4n^{2}|\nabla u_{t}|^{2}|\nabla^{2}u|, \\[1mm]
\end{split}
\end{equation}
Recalling the equation (\ref{Chen-He equation 1}) and $f>0$, we have
\begin{equation*}
|\nabla u_{t}|=\sqrt{u_{tt}B_{u}-f}\leq\sqrt{u_{tt}B_{u}},
\end{equation*}
which implies
\begin{equation}\label{Main estimate equation 10}
\begin{split}
4n^{2}|\nabla u_{t}|^{2}|\nabla^{2}u|
\leq {} & 4n^{2}(\sqrt{u_{tt}}|\nabla^{2}u|)(\sqrt{B_{u}}|\nabla u_{t}|) \\
\leq {} & u_{tt}|\nabla^{2}u|^{2}+4n^{4}B_{u}|\nabla u_{t}|^{2}.
\end{split}
\end{equation}
Combining (\ref{Main estimate equation 9}) and (\ref{Main estimate equation 10}), it follows that
\begin{equation}\label{Main estimate equation 7}
\begin{split}
dQ(|\nabla u|^{2})
\geq {} & -Cu_{tt}-C|\nabla f|+u_{tt}|\nabla^{2}u|^{2}-CB_{u}|\nabla u_{t}|^{2} \\
\geq {} & -Cu_{tt}-Cf^{\frac{1}{2}}|\nabla(f^{\frac{1}{2}})|+u_{tt}|\nabla^{2}u|^{2}-CB_{u}|\nabla u_{t}|^{2} \\
\geq {} & u_{tt}(|\nabla^{2}u|^{2}-C)-C|\nabla u_{t}|^{2}-C\sqrt{f},
\end{split}
\end{equation}
where we used $B_{u}\leq C$ in the last inequality. Substituting (\ref{Main estimate equation 6}) and (\ref{Main estimate equation 7}) into (\ref{Main estimate equation 2}), at $(x_{0},t_{0})$, we obtain
\begin{equation}\label{Main estimate equation 14}
0 \geq u_{tt}(|\nabla^{2}u|^{2}-C|\nabla^{2}u|-C)-C|\nabla u_{t}|^{2}-C\sqrt{f}+2AB_{u}.
\end{equation}
From the equation (\ref{Chen-He equation 1}) and $|B_{u}|+|u_{tt}|\leq C$, we have
\begin{equation}\label{Main estimate equation 15}
C|\nabla u_{t}|^{2}+C\sqrt{f}
\leq Cu_{tt}B_{u}+C\sqrt{u_{tt}B_{u}}
\leq C\sqrt{u_{tt}B_{u}}
\leq CB_{u}+Cu_{tt}.
\end{equation}
Substituting (\ref{Main estimate equation 15}) into (\ref{Main estimate equation 14}), it follows that
\begin{equation*}
0 \geq u_{tt}(|\nabla^{2}u|^{2}-C|\nabla^{2}u|-C)+(2A-C)B_{u}.
\end{equation*}
Since $u_{tt}>0$ and $B_{u}>0$, after choosing $A$ sufficiently large, we obtain $u_{11}(x_{0},t_{0})\leq C$, as desired.
\end{proof}

\end{document}